\theoremstyle{definition}
\newtheorem{theorem}{Theorem}[section]
\newtheorem{definition}[theorem]{Definition}
\newtheorem{proposition}[theorem]{Proposition}
\newtheorem{lemma}[theorem]{Lemma}
\newtheorem{remark}[theorem]{Remark}
\newtheorem{corollary}[theorem]{Corollary}
\numberwithin{equation}{section}
\DeclareMathOperator*{\supp}{spt}
\newcommand{\lc}{\scalebox{1.8}{$\llcorner$}}
\newcommand{\btau}{\boldsymbol\tau}
\newcommand{\mc}{\mathcal}
\DeclareMathOperator*{\area}{Area}
\DeclareMathOperator*{\length}{Length}
\DeclareMathOperator*{\Div}{div}
\title[Min-max theory for CMC networks]{Min-max theory for networks of constant geodesic curvature}
\author[Xin Zhou]{Xin Zhou}
\address{Department of Mathematics, University of California Santa Barbara, Santa Barbara, CA 93106, USA; and School of Mathematics, Institute for Advanced Study, Princeton, NJ 08540, USA}
\email{zhou@math.ucsb.edu}
\author[Jonathan J. Zhu]{Jonathan J. Zhu}
\address{Department of Mathematics, Princeton University, Princeton, NJ 08544, USA}
\email{jjzhu@math.princeton.edu}
\begin{document}
\begin{abstract}
We prove that on a closed surface, for any $c>0$, our min-max theory for prescribing mean curvature produces a solution given by a curve of constant geodesic curvature $c$ which is almost embedded, except for finitely many points, at which the solution is a stationary junction with integer density. Moreover, each smooth segment has multiplicity one. The key is a classification of blowups which is new even for $c=0$. 
\end{abstract}
\maketitle

\section{Introduction}

The min-max construction of closed geodesics dates back to Birkhoff \cite{Birkhoff17}, and tremendous progress has been made since then (see \cite{MN17} for a nice summary). The min-max construction of closed curves (or networks) of (nonzero) constant geodesic curvature, however, has not been thoroughly investigated. In particular, it has been conjectured by Arnold \cite[page 395]{Arnold04} and Novikov \cite[Section 5]{Arnold04} that every topological two sphere admits closed embedded curves of any prescribed constant geodesic curvature. This conjecture remains open, and we refer to \cite{Ginzburg96, Rosenberg-Schneider11, Schneider11} for more background and some partial results towards this conjecture.

The goal of this article is to show that on a closed surface, for any $c>0$, our CMC min-max theory \cite{Zhou-Zhu17, Zhou-Zhu18} (which is based on the Almgren-Pitts min-max theory for minimal hypersurfaces \cite{AF62, Pitts81}) produces a solution given by a curve of constant geodesic curvature $c$ which is almost embedded, except for finitely many points, 
at which the solution is a stationary junction with integer density. Moreover, each smooth constant geodesic curvature segment has multiplicity one. See Theorem \ref{T:main result1} for a precise statement.


The key is a graph theoretic argument (Section \ref{S:combinatorial argument}) to classify blowups which have 
a number of iterated replacements in open disks. In particular, we prove that such blowups are integer multiple of a line.  See Theorem \ref{T:main result2} for the detailed statement.


This classification result (for blowups) is new even for the case of geodesics, that is $c=0$. The existence of a nontrivial geodesic network was known by Pitts \cite{Pitts74} (based upon earlier works of Almgren \cite{AF65}; note that Pitts' result also holds true in higher codimension). In fact, in \cite{Pitts74} Pitts proved that the 1-dimensional min-max varifold is always supported in the image of its tangent varifold under the exponential map at any given point. Consequently, the min-max varifold is represented by a geodesic network. Note that Pitts's result does not preclude the tangent varifold being a bouquet of half lines (even if the min-max varifold is almost minimising near that given point). 

The existence of a geodesic network has another proof by combining Pitts \cite{Pitts81} with Allard-Almgren \cite{Allard-Almgren76}. Pitts \cite{Pitts81} proved the existence of a weak min-max solution as a nontrivial, stationary, integer rectifiable, 1-varifold in any closed manifold. The regularity theory of Allard-Almgren \cite{Allard-Almgren76} for stationary 1-varifolds then implies that Pitts's weak solution is a geodesic networks (with constant integer multiplicity on each geodesic segment). We also refer to Calabi-Cao \cite[Appendix]{Calabi-Cao92} and Aiex \cite{Aiex16} for other proofs of this result. 

However, even on surfaces, one can not follow Pitts's regularity argument  in \cite{Pitts81} (which succeeds for hypersurfaces of dimension between 2 and 6) to prove this network regularity without Allard-Almgren \cite{Allard-Almgren76}. The main missing ingredient for curves is that Simons' classification for minimal stable hypercones \cite{Simons68} does not hold for curves. In particular, Pitts's argument \cite[7.8]{Pitts81} cannot extend to prove that tangent cones are lines without Simons' classification. 

Nevertheless, using our new characterisation for blowups, Pitts's work \cite{Pitts81} does directly imply the geodesic network regularity of his weak solution. In fact, away from finitely many points, Pitts' weak solution has the good replacement property in small balls, so any tangent varifold satisfies the assumptions of our classification result (using an observation in \cite[Lemma 5.10]{Zhou-Zhu17}), and hence is an integer multiple of a line. With this, one can proceed the same as Pitts to obtain the desired regularity. 

In this paper, we carry out the process described above in the setting for $c>0$, using the theory developed by us in \cite{Zhou-Zhu17, Zhou-Zhu18}.

\vspace{1em}
In Section \ref{S:Min-max construction for weighted length functional}, we introduce the problem and state the main result. In Section \ref{S:previous results and proof}, we collect necessary results in our previous CMC min-max theory and prove the main theorem. In Section \ref{S:combinatorial argument}, we prove the key ingredient on classifying blowups.

\begin{remark}
In \cite{KL18}, the authors have built upon our results, improving the regularity to show that the networks produced are either smooth or $C^{1, 1}$ curves. In particular, they proved that if the only junction is not a smooth point, then the tangent cone consists of two lines intersecting transversally.  



\end{remark}

\section*{Acknowledgements}

X. Zhou is partially supported by NSF grant DMS-1811293.  J. Zhu is supported by NSF grant DMS-1802984, and would like to thank Erick Knight for helpful discussions. Both authors would like to thank the Institute for Advanced Study for their hospitality.

\section{Min-max construction for weighted length functional}
\label{S:Min-max construction for weighted length functional}

In this part, we will briefly introduce the setup for min-max construction for constant geodesic networks. We refer to \cite{Zhou-Zhu17} for more details. 

Let $(S, g)$ be a closed 2-dimensional surface with a Riemannian metric $g$. Fix a positive number $c>0$. Given any Caccioppoli set $\Omega\subset S$, we define the {\em $c$-weighted length functional} or {\em $c$-length} as
\begin{equation}
\label{D:Lc functional}
{\mc L}^c(\Omega)=\length(\partial\Omega)-c\area(\Omega),
\end{equation}
where $\length$ and $\area$ are calculated with respect to the metric $g$.

A 1-parameter families of Caccioppoli sets $\{\Omega_t\}_{t\in[0, 1]}$ is said to be a {\em sweepout}, if
\begin{itemize}
\item $\Omega_0=\emptyset$, $\Omega_1=\Sigma$;
\item the boundaries $\{\partial\Omega_t\}$ are continuous in $t$ with respect to the flat topology.
\end{itemize}  

We can then define the min-max value of  ${\mc L}^c$ as
\begin{equation}
\label{E:c-min-max}
{\bf L}^c=\inf\left\{\max_{t\in[0, 1]} {\mc L}^c(\partial\Omega_t):\, \{\Omega_t\}_{t\in[0, 1]} \text{ is a sweepout}\right\}.
\end{equation}

In this paper we will prove that
\begin{theorem}
\label{T:main result1}
There exists a nontrivial 1-varifold $V$, finitely many points $\{p_i\}_{i=1}^n\subset S$, and a Caccioppoli set $\Omega$, such that 
\begin{enumerate}
\item $V$ is induced by $\partial \Omega$ (of multiplicity 1);
\item away from $\{p_i\}_{i=1}^n$, the boundary $\gamma_0=\partial\Omega$ is an almost embedded curve of constant geodesic curvature $c$; 
\item at each $p_i$, the density of $V$ is an integer, and any tangent cone is a stationary geodesic network in $\mathbb R^2$, smooth away from $0$.
\end{enumerate}
\end{theorem}

Here `almost embedded' means that $\gamma_0$ is a smooth immersion, and near each self-intersection point $\gamma_0$ decomposes to two connected embedded components which touch but do not cross.

\begin{remark}
In fact, by refining Pitts's combinatorial argument \cite[4.10]{Pitts81} with the observation of Colding-De Lellis (the remark after \cite[Proposition 3.3]{Colding-DeLellis03}), one can show that the set $\{p_i\}_{i=1}^n$ consists of only one point. We will fill in the details of this fact elsewhere.
\end{remark}

\section{Results from \cite{Zhou-Zhu17} and proof of Theorem \ref{T:main result1}}
\label{S:previous results and proof}

In \cite{Zhou-Zhu17, Zhou-Zhu18}, the authors established an existence theory, which in this setting yields that there is a 1-varifold $V$ associated with $\bf L^c$ satisfying a list of useful properties that we will summarize in the following. In particular, the theory in \cite{Zhou-Zhu17, Zhou-Zhu18} works in any closed Riemannian manifold $(M^{n+1}, g)$ (using the corresponding $n$-dimensional $c$-weighted area functional), and when $3\leq n+1\leq 7$, we proved that $V$ is induced by the boundary of some Caccioppoli set $\Omega_0$, whose boundary $\Sigma_0=\partial\Omega$ is an almost embedded 
closed hypersurface of constant mean curvature $c$. However, since the classification of stable minimal hypercones by Simons \cite{Simons68} does not hold in dimension $n=1$, we cannot directly obtain similar regularity results for $V$ when $n=1$. Instead, we will exploit some stronger properties of $V$ that were obtained in \cite{Zhou-Zhu17} to achieve some partial regularity. In fact, we will use certain good replacement properties in small disks instead of just in small annuli.

Note that we used a discrete setup in \cite{Zhou-Zhu17, Zhou-Zhu18} following the classical work of Almgren-Pitts \cite{AF62, Pitts81}. We will not dip into these sophisticated notations, as we can start directly with the outcomes in \cite{Zhou-Zhu17}.

Before summarizing what we proved in \cite{Zhou-Zhu17}, we need to introduce the notion of $c$-replacements. A 1-varifold $V$ is said to have {\em $c$-bounded first variation}, if for any smooth vector field $X$ on $S$, 
\[ \left| \int {\Div}_S X(x) dV(x, S) \right|\leq c \cdot \int_S |X(x)|\, d\|V\|(x). \]

\begin{definition}
\label{D:c-replacement} 
Given a 1-varifold $V$ with $c$-bounded first variation and an open set $U$ in $S$, $V^*$ is said to be a {\em $c$-replacement} of $V$ in $U$ if 
\begin{enumerate}
\item $V$ coincides with $V^*$ outside the closure $\overline{U}$, i.e., $V\lc Gr_1(S\backslash \overline{U})=V^*\lc Gr_1(S\backslash \overline{U})$;\footnote{Here $Gr_1(U)$ is the Grassmannian bundle of 1-lines over $U$.}
\item $\|V\|(S)-c\cdot \area(U) \leq \|V^*\|(S) \leq \|V\|(S)+c\cdot\area(U)$;
\item $V^*$, when restricted to $U$, is induced by the boundary of some open subset $\Omega^*\cap U$ (here $\Omega^*$ is an Caccioppoli set), that is, $V^*\lc Gr_1(U)=[\partial \Omega^*\cap U]$, such that $\partial\Omega^* \cap U$ is an almost embedded curve of constant geodesic curvature $c$;
\item $V^*$ has $c$-bounded first variation.
\end{enumerate}
\end{definition}

We proved in \cite{Zhou-Zhu17} that $V$ has certain good replacement properties:

\begin{theorem}
\label{T:results from Zhou-Zhu17}
[Theorem 5.6, Proposition 5.8, Lemma 5.9 in \cite{Zhou-Zhu17}]
Given $c>0$, let ${\bf L}^c$ be defined as (\ref{E:c-min-max}), then there exists a 1-varifold $V$ in $(S, g)$, such that
\begin{enumerate}
\item ${\bf L}^c>0$ and hence $V$ is nontrivial;
\item $V$ has $c$-bounded first variation;
\item for any $p\in S$, $V$ has a $c$-replacement $V^*$ in any small enough annulus centered at $p$; hence by a covering argument, there exists a finite set $\mathcal{P}=\{p_i\}_{i=1}^n$, so that for any $p\in S\backslash \mathcal{P}$, there exists a neighborhood $U\subset S\backslash \mathcal{P}$ of $p$, such that $V$ has a $c$-replacement $V^*$ in $U$; 
\item in any neighborhood $U$ where $V$ has a $c$-replacement, $V^*$ also has a $c$-replacement $V^{**}$ in $U$; and this procedure of taking $c$-replacements can be iterated as many times as one wants.
\end{enumerate}
\end{theorem}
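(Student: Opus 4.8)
The plan is to obtain all four items as the specialization to curves ($n=1$) of the general CMC min-max theory of \cite{Zhou-Zhu17, Zhou-Zhu18}; the key point is that none of these four conclusions invokes Simons' classification, so the general machinery applies verbatim in dimension one. For positivity (1), I would bound ${\bf L}^c$ from below by the isoperimetric profile of $(S,g)$. Any sweepout $\{\Omega_t\}$ has $\area(\Omega_0)=0$ and $\area(\Omega_1)=\area(S)$ with $t\mapsto\area(\Omega_t)$ continuous, so for a fixed small $A_0>0$ some slice satisfies $\area(\Omega_t)=A_0$. The isoperimetric inequality gives $\length(\partial\Omega_t)\geq\Phi(A_0)$, where $\Phi(A)\sim C A^{1/2}$ as $A\to 0$; hence $\max_t{\mc L}^c(\partial\Omega_t)\geq\Phi(A_0)-cA_0>0$ once $A_0$ is small. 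Taking the infimum over sweepouts preserves this positive lower bound, so ${\bf L}^c>0$ and $V$ is nontrivial.

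For $c$-bounded first variation (2), $V$ is produced as a varifold limit of boundaries $\partial\Omega_{t_i}$ along a minimizing sequence of sweepouts whose relevant slices nearly achieve the max, namely ${\bf L}^c$. Applying the pull-tight deformation of \cite{Zhou-Zhu17} forces the limit to be weakly critical for ${\mc L}^c$: since the first variation of length along $X$ pairs the geodesic curvature with the normal component of $X$, while the first variation of area contributes $\int\langle X,\nu\rangle$, weak criticality yields $|\delta V(X)|\leq c\int|X|\,d\|V\|$, which is exactly $c$-bounded first variation.

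The replacement property (3)--(4) is the heart of the matter. The almost-minimizing property of $V$ in small annuli, obtained from the combinatorial and interpolation arguments of \cite{Zhou-Zhu17}, lets one construct $V^*$ by minimizing ${\mc L}^c$ inside the annulus among competitors that fix the data outside; the interior regularity of ${\mc L}^c$-minimizers from \cite{Zhou-Zhu17, Zhou-Zhu18} then shows that $\partial\Omega^*\cap U$ is an almost embedded curve of constant geodesic curvature $c$, and direct comparison gives the mass bounds in Definition \ref{D:c-replacement}(2). A covering argument isolates a finite exceptional set $\mathcal{P}=\{p_i\}_{i=1}^n$ and upgrades the annular replacement to a genuine neighborhood (disk) replacement away from $\mathcal{P}$. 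Iteration in (4) holds because $V^*$ is again almost-minimizing in $U$, so the same minimization applies to it.

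The main obstacle is the passage from annular to disk replacements in (3) together with the sharp regularity of the minimizers. Almost-minimizing is only naturally available in annuli, so one must control the finitely many bad points comprising $\mathcal{P}$ and verify that the glued varifold $V^*$ inherits $c$-bounded first variation across the interface $\partial U$. Establishing that ${\mc L}^c$-minimizers in a disk are almost embedded curves of constant geodesic curvature $c$, rather than merely stationary, is the delicate analytic input; this is precisely the content carried out in \cite{Zhou-Zhu17, Zhou-Zhu18}, and it is what allows the disk replacements that will feed into the blowup classification.
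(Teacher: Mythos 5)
Your proposal is correct and follows essentially the same route as the paper: the paper does not reprove this theorem but imports it wholesale from \cite{Zhou-Zhu17} (Theorem 5.6, Proposition 5.8, Lemma 5.9), and your sketch --- isoperimetric lower bound giving ${\bf L}^c>0$, pull-tight for $c$-bounded first variation, $c$-almost minimizing in small annuli plus constrained minimization to build replacements, a covering argument producing the finite exceptional set $\mathcal{P}$ with disk replacements away from it, and iteration because $V^*$ is again almost minimizing in $U$ --- is precisely the cited machinery. The paper's only added commentary (the remark following the theorem) matches your closing observation: none of these conclusions requires Simons' classification, since the regularity of $V^*$ rests on curvature estimates for stable CMC hypersurfaces that are trivially true in dimension $n=1$.
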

\begin{remark}
In \cite[Theorem 5.6, Proposition 5.8, Lemma 5.9]{Zhou-Zhu17}, we proved that $V$ is $c$-almost minimizing in any small annulus and hence has a $c$-replacement. As mentioned earlier, by a remark of Colding-De Lellis after \cite[Proposition 3.3]{Colding-DeLellis03}, one can prove that $V$ is $c$-almost minimizing in any small open neighborhood, except at one point. (This will be addressed elsewhere by the authors.) 

To gain regularity of $V^*$ in $U$, we used curvature estimates for stable hypersurfaces of constant mean curvature in \cite[Theorem 2.6]{Zhou-Zhu17}, but this is trivially true in dimension $n=1$ for curves of constant geodesic curvature.
\end{remark}

As a key step to obtain our main regularity results in \cite{Zhou-Zhu17}, we analyzed the blowups of $V$ using the good replacement properties. In particular, we proved,
\begin{proposition}
\label{P:blowups have good replacements}
[Lemma 5.10 in \cite{Zhou-Zhu17}] 
Let $V$ be as in Theorem \ref{T:results from Zhou-Zhu17}. Given any $p\in S\backslash \{p_i\}_{i=1}^n$, and a tangent varifold $C\in \text{TanVar}(V, p)$ of $V$ at $p$, then $C$ satisfies,
\begin{enumerate}
\item $C$ is a stationary $1$-varifold in $\mathbb{R}^2$;
\item given any open set $U\subset \mathbb{R}^2$, $C$ has a $0$-replacement $C^*$;
\item $C^*$ has $0$-replacement in any open set $W\subset \mathbb{R}^2$.
\end{enumerate}
\end{proposition}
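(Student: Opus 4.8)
The plan is to blow up $V$ \emph{together with} its $c$-replacements at $p$, and to exploit the fact that under the dilation the $c$-weighting degenerates to the minimal ($c=0$) structure: mass (length) scales linearly, area quadratically, and geodesic curvature like the reciprocal of the dilation factor. Fix a blowup sequence $\lambda_i\to\infty$ realising $C$, and let $\eta_i(x)=\lambda_i\exp_p^{-1}(x)$ be the associated dilations of a neighborhood of $p$ into $\mathbb{R}^2$, so that $V_i:=(\eta_i)_\# V\to C$ as varifolds while the rescaled metrics $g^{(\lambda_i)}$ converge smoothly to the Euclidean metric on compact subsets of $\mathbb{R}^2$. The three conclusions then follow in parallel from the single principle that ``blowup commutes with taking replacements.''

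For (1) I would transform the $c$-bounded first variation inequality under $\eta_i$. Pulling back a compactly supported field $X$ on $\mathbb{R}^2$ to $S$, and using that mass rescales by $\lambda_i$ while curvature rescales by $\lambda_i^{-1}$, the inequality $|\delta V(Y)|\le c\int|Y|\,d\|V\|$ becomes $|\delta V_i(X)|\le (c/\lambda_i)\int|X|\,d\|V_i\|$ (computed with respect to $g^{(\lambda_i)}$). Since $\|V_i\|$ has locally bounded mass by the monotonicity-type estimate accompanying $c$-bounded first variation, letting $i\to\infty$ and using $g^{(\lambda_i)}\to$ Euclidean gives $\delta C(X)=0$; that is, $C$ is a stationary $1$-varifold in $\mathbb{R}^2$.

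For (2), fix an open $U\subset\mathbb{R}^2$ and set $U_i:=\eta_i^{-1}(U)$. Since $p\notin\{p_j\}$, $V$ is $c$-almost minimising in a fixed neighborhood of $p$, and for $i$ large $U_i$ lies inside that neighborhood, so Theorem \ref{T:results from Zhou-Zhu17} yields a $c$-replacement $V_i^*$ of $V$ in $U_i$. Rescaling, $\tilde V_i^*:=(\eta_i)_\# V_i^*$ agrees with $V_i$ outside $\ol U$ and, after passing to a subsequence, converges to a $1$-varifold $C^*$; I would verify the four defining properties of a $0$-replacement for $C^*$. The agreement $C^*=C$ on $Gr_1(\mathbb{R}^2\setminus\ol U)$ is immediate from that of $\tilde V_i^*$ and $V_i$, and $C^*$ has $0$-bounded first variation by the scaling argument of (1). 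The mass condition holds because the replacement discrepancy $c\cdot\area(U_i)=O(\lambda_i^{-2})$, which after the mass rescaling by $\lambda_i$ contributes only $O(\lambda_i^{-1})\to 0$. For interior regularity, $V_i^*\lc U_i$ is an almost embedded curve of constant geodesic curvature $c$, so $\tilde V_i^*\lc U$ has curvature $c/\lambda_i\to 0$; the CMC curvature estimates (trivial for curves) give uniform curvature bounds and hence smooth subsequential convergence to an almost embedded geodesic (straight-line) network, which together with compactness of the associated Caccioppoli sets identifies $C^*\lc Gr_1(U)$ as the boundary of a limiting Caccioppoli set. For (3) I would simply iterate: Theorem \ref{T:results from Zhou-Zhu17} furnishes a further $c$-replacement $V_i^{**}$ of $V_i^*$ in $W_i:=\eta_i^{-1}(W)$, and blowing up $V_i^{**}$ exactly as above produces a $0$-replacement $C^{**}$ of $C^*$ in $W$.

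The main obstacle is the passage to the limit in (2): one must ensure the replacement structure---being induced by the boundary of a Caccioppoli set, with matching mass and genuine regularity---survives the blowup rather than degenerating. This rests on two quantitative inputs, namely that the rescaled mass discrepancy vanishes and that the curvature estimates force the rescaled replacement curves (of curvature $c/\lambda_i$) to converge smoothly to straight segments, combined with compactness of Caccioppoli sets under the uniform perimeter bounds supplied by the mass condition. Once the interplay between the dilation scaling and the $c$-weighting is pinned down, all three conclusions drop out simultaneously.
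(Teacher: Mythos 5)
The paper does not contain its own proof of this proposition: it is quoted verbatim from \cite{Zhou-Zhu17} (Lemma 5.10 there), so the only comparison available is against that external argument, which is precisely the dilation-scaling proof you outline. Your scaling computations are correct — under $\eta_i$ the first variation bound becomes $(c/\lambda_i)$-bounded, the replacement mass discrepancy $c\cdot\area(U_i)$ contributes $O(\lambda_i^{-1})$ after rescaling, and the curvature of the replacement curves becomes $c/\lambda_i$, with the curvature estimate trivial for curves exactly as the remark after Theorem \ref{T:results from Zhou-Zhu17} notes — so your proposal is essentially the intended proof. Two points deserve more care than you give them. First, your argument needs $U$ and $W$ bounded, so that $\eta_i^{-1}(U)$ eventually lies inside the fixed neighborhood of $p$ where Theorem \ref{T:results from Zhou-Zhu17} supplies $c$-replacements (and one should note that $c$-almost minimizing localizes to open subsets of that neighborhood, and that part (4) permits the second replacement to be taken in a \emph{different} subset $W_i\neq U_i$); this is harmless for the paper's purposes, since Section \ref{S:combinatorial argument} only ever replaces in small disks. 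Second, your closing claim that compactness of Caccioppoli sets ``identifies $C^*\lc Gr_1(U)$ as the boundary of a limiting Caccioppoli set'' is too strong: the rescaled almost embedded curves can converge with integer multiplicity at least $2$ (e.g.\ at a touching point of $V$, where $C$ itself is a multiplicity-two line), so $C^*\lc U$ is in general an integer-multiplicity union of non-crossing straight segments, not the multiplicity-one boundary demanded by a literal reading of Definition \ref{D:c-replacement}(3). The notion of $0$-replacement for blowups must be interpreted with integer multiplicities — which is exactly how the paper uses it: the admissible networks of Section \ref{S:combinatorial argument} carry integer weights $m_{vw}$, and Corollary \ref{C:tangent cones are lines} concludes $C$ is an integer \emph{multiple} of a line.
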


As a direct corollary of Theorem \ref{T:main result2} in Section \ref{S:combinatorial argument}, we have,
\begin{corollary}
\label{C:tangent cones are lines}
Any $C$ in Proposition \ref{P:blowups have good replacements}  is an integer multiple of a line passing the origin.
\end{corollary}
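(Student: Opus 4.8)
The plan is to read off the corollary directly from the classification in Theorem \ref{T:main result2}, so that the only genuine work is to check that every tangent varifold $C$ produced by Proposition \ref{P:blowups have good replacements} meets the hypotheses of that theorem, and then to upgrade the conclusion ``integer multiple of a line'' to a line \emph{through the origin} using the cone structure of tangent varifolds.

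First I would observe that the three conclusions of Proposition \ref{P:blowups have good replacements} are exactly the input demanded by Theorem \ref{T:main result2}: $C$ is a stationary $1$-varifold in $\mathbb{R}^2$, it admits a $0$-replacement $C^*$ in any open set, and $C^*$ in turn admits a $0$-replacement in any open set. In particular, since every open disk is an open set, the iterated good-replacement property holds on precisely the family of open disks on which the graph-theoretic argument of Section \ref{S:combinatorial argument} operates. Thus Theorem \ref{T:main result2} applies with no modification and yields that $C$ is an integer multiple of some line $\ell\subset\mathbb{R}^2$.

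It then remains only to locate $\ell$ at the origin. Here I would invoke that $C\in\text{TanVar}(V,p)$ is a tangent varifold, hence invariant under the dilations $x\mapsto\lambda x$ for every $\lambda>0$. An integer multiple of a line $\ell$ is invariant under all such dilations if and only if $\ell$ passes through the origin: otherwise a dilate of $\ell$ would be a distinct parallel line at positive distance, contradicting the dilation invariance of $C$. This pins $\ell$ through $0$ and finishes the proof.

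Since all the substantive content is carried by Theorem \ref{T:main result2}, the corollary poses no real obstacle of its own; the only items to verify are the (routine) matching of hypotheses and the (elementary) dilation-invariance argument that places $\ell$ at the origin. The essential difficulty---classifying stationary $1$-varifolds enjoying the iterated replacement property, in a setting where Simons' classification of stable minimal cones \cite{Simons68} is unavailable in dimension $n=1$---is confined entirely to the proof of Theorem \ref{T:main result2}.
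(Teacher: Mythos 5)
Your overall plan coincides with the paper's: Corollary \ref{C:tangent cones are lines} is stated there as a \emph{direct} consequence of Theorem \ref{T:main result2}, with Proposition \ref{P:blowups have good replacements} supplying the replacement hypotheses, and the paper offers no further written proof. So the route is the intended one. But the assertion that the hypotheses match ``with no modification'' overstates the situation, and this is where the only real content of the corollary lives. Theorem \ref{T:main result2} is a statement about \emph{good networks} --- finite graphs with vertices on the unit circle, radial exterior rays, straight interior edges carrying positive integer weights, no crossings, and a combinatorial replacement operation at vertices --- not about varifolds. To feed $C$ into it one must first realize $C$ in that framework: $C$ is a cone by the (almost-)monotonicity formula for varifolds of $c$-bounded first variation, and integer rectifiable as a tangent varifold of an integral varifold (as the paper uses in Step 3 of the proof of Theorem \ref{T:main result1}); this, together with a density bound, is what yields finitely many radial rays of constant integer multiplicity outside any disk, i.e.\ the exterior-edge condition. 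The $0$-replacement $C^*$ in the unit disk is an almost embedded curve of zero geodesic curvature, hence a union of non-crossing straight chords, giving the interior edges --- after choosing the radius so that the intersection with the circle is transversal and avoids touching points, exactly as in Step 1 of the proof of Theorem \ref{T:main result1}; vertex stationarity at the junctions on the circle comes from Definition \ref{D:c-replacement}(4) with $c=0$; and the \emph{combinatorial} replacements at vertices (iterated four times in Section \ref{S:combinatorial argument}) must be manufactured from the analytic ones by taking $0$-replacements of $C^*$ in small disks about each junction and rescaling, rerunning the same translation. The paper suppresses all of this because Section \ref{S:combinatorial argument} was designed to absorb it, but a blind proof cannot call it ``routine matching'' without at least sketching it.

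Your closing dilation argument is correct but redundant, and its placement reveals the same glossed step: dilation invariance of $C$ is needed at the \emph{front} of the argument, to obtain the radial exterior rays centered at the origin that the network framework presupposes --- not at the end. Once Theorem \ref{T:main result2} applies, its conclusion ($N=2$, diametrically opposite vertices on the unit circle, radial exterior rays, the diameter as interior edge, equal multiplicities) is already an integer multiple of a full line through the origin, so there is nothing left to upgrade.
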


\vspace{1em}
Now we are ready to sketch the proof of Theorem \ref{T:main result1}. Using Corollary \ref{C:tangent cones are lines} in place of \cite[Proposition 5.11]{Zhou-Zhu17}, the regularity of $V$ away from $\{p_i\}_{i=1}^n$ follows from that of \cite[Theorem 6.1]{Zhou-Zhu17} with minor modifications. The structure of tangent cones of $V$ at $\{p_i\}_{i=1}^n$ follows from a classical argument of characterizing tangent cones of min-max varifold by Almgren-Pitts \cite[3.13]{Pitts81}. 
We will mainly focus on the differences with the proof of \cite[Theorem 6.1]{Zhou-Zhu17}.

\begin{proof}[Proof of Theorem \ref{T:main result1}] We will prove parts (1)(2)(3) in three steps.
\vspace{1em}

\textbf{Step 1:} We first focus on a neighborhood of a point $p\in \textrm{spt}\|V\| \backslash \mathcal{P}$, where the set $\mathcal{P}=\{p_i\}_{i=1}^n$ is given in Theorem \ref{T:results from Zhou-Zhu17}. Take a small enough radius $r>0$, such that $V$ has $c$-replacements in the geodesic ball $B_r(p)\subset S$. Fix any $0<s<r$, and take a $c$-replacement $V^*$ in the annulus $A_{s, r}(p)=B_r(p)\backslash\overline{B_s(p)}$. By the definition of $c$-replacement, $V^*\lc A_{s, r}(p)$ is induced by the boundary of some Caccioppoli set $\Omega^*$, and is an almost embedded curve, denoted by $\gamma_1$, of constant geodesic curvature $c$.

Take a radius $s<t<r$, such that the sphere $\partial B_{t}(p)$ intersects $\gamma_1$ transversally\footnote{The existence of such $t_2$ follows from Sard's Theorem.}, and intersects along the regular (non-touching) set of $\gamma_1$\footnote{The touching set of $\gamma_1$ is a discrete set.}. Now take a $c$-replacement $V^{**}$ of $V^*$ in $B_t(p)$ (usually called the second replacement). Again $V^{**}\lc B_t(p)$ is given by an almost embedded curve $\gamma_2$ of constant geodesic curvature $c$. Using Corollary \ref{C:tangent cones are lines} in place of \cite[Proposition 5.11]{Zhou-Zhu17}, we can follow the same procedure as in \cite[Theorem 6.1, Steps 1 and 2]{Zhou-Zhu17} to show that $\gamma_1=\gamma_2$ in the overlapping region $A_{s, t}(p)$, and hence they form an almost embedded curve $\gamma$ in $B_r(p)$. 

The next step is to use $c$-replacements in annuli $A_{\tau, t}(p)$, where $0<\tau<s$. Let $V^{**}_{\tau}$ be the $c$-replacement of $V^*$ in $A_{\tau, t}(p)$, which is induced by an almost embedded curve $\gamma_\tau$.  By the same reasoning, we have $\gamma_\tau=\gamma_1$ in $A_{s, t}(p)$, and hence by ODE uniqueness theory, $\gamma_\tau=\gamma\cap A_{\tau, t}(p)$. 

Then by the moving sphere argument \cite[Theorem 6.1, Step 5]{Zhou-Zhu17}, we can show that $V$ is induced by $\gamma$ inside $B_s(p)$. This finishes the proof of the regularity of $V$ away from $\{p_i\}_{i=1}^n$ (part (2) in Theorem \ref{T:main result1}).

\vspace{1em}
\textbf{Step 2:}
By the same argument as \cite[Proposition 7.3]{Zhou-Zhu18}, $V$ is induced by the boundary of some Caccioppoli set $\Omega$, and 
\[ \mc L^c(\Omega)=\bf L^c. \]
This finished part (1) in Theorem \ref{T:main result1}.

\vspace{1em}
\textbf{Step 3:}
Finally we prove the structure of tangent cones $\textrm{TanVar}(V, p_i)$ at each $p_i$, i.e. part (3) in Theorem \ref{T:main result1}. Given a tangent cone $C$ at $p_i$, we know that $C$ is stationary and integer rectifiable, since $V$ has $c$-bounded first variation and is integer rectifiable. 
Now by smooth convergence, since $V$ consists of constant curvature curves, $C$ must be a geodesic network with constant integer multiplicity in each segment. Since $C$ is a cone, $\textrm{spt}\|C\|$ must be a finite union of half lines coming out of the origin. The only thing left to prove is to show that the sum of all integer multiplicities must be an even number, hence the density of $C$ at the origin - which is the same as that of $V$ at $p_i$ - is an integer.

Write 
\[ C=\lim_{j\to\infty} (\btau_{r_j, p_i})_{\#}V, \text{ as varifolds. }\]
Here $\{r_j\}$ is a sequence of positive numbers converging to $0$, and $\btau_{r_j, p_i}(x)=\frac{x-p_i}{r_j}$ are the rescaling maps\footnote{Here we can isometrically embed $(S, g)$ into some Euclidean space $\mathbb R^L$, and the calculation $\frac{x-p_i}{r_j}$ is done in $\mathbb R^L$.}.

Note that $V=\partial\Omega$, and consider the limit 
\[ \Omega'= \lim_{j\to\infty} (\btau_{r_j, p_i})_{\#}\Omega, \text{ as Cacciopolli sets. }\]
By the weak convergence, the $\supp \|\partial \Omega'\| \subset \supp \|C\|$, and it is easy to see that away from the origin the multiplicity of $C$ minus the multiplicity of $\partial\Omega'$ (which is identical to 1) must be an even number. On the other hand, one can see that there must be even numbers of half lines in $\textrm{spt}\|\partial\Omega'\|$ (to form the boundary of a set). Summing all ingredients together, we have proven that the number of half lines of $C$ (counting multiplicity) is even. 
\end{proof}

\section{Combinatorial argument}
\label{S:combinatorial argument}

In this part, we change gear to study geodesic networks arising in Proposition \ref{P:blowups have good replacements}. our main goal is to prove Theorem \ref{T:main result2}. 

We define a stationary network $V$ in $\mathbb{R}^2$ to be a network whose edges $vw$ are straight line segments with positive integer weight (multiplicity) $m_{vw}$, and which satisfies at each vertex $v$ of $V$ the stationarity condition \[ \sum_{vw \in V} m_{vw} \vec{T}_{vw} =0.\] Here $\vec{T}_{vw} = \frac{w-v}{|w-v|}$ is the outward unit tangent from $v$ along the edge $vw$. 

By a slight abuse of notation we henceforth consider stationary networks $V$ with $N$ vertices lying on the unit circle $S^1\subset \mathbb{R}^2$, each with an exterior radial edge to infinity; and $E$ edges interior to the circle. In what follows let $\mathring{V}$ be the interior graph of $V$ (consisting of those edges inside the circle); any graph theoretic concepts (degree, neighbourhood, etc.) are with respect to $\mathring{V}$. 

We say that such a stationary network $V$ is admissible if it satisfies: 
\begin{enumerate}
\item At each vertex $v$, we have \[m_v v + \sum_{w \in N_v} m_{vw} \vec{T}_{vw} =0,\] where $N_v$ is the set of vertices adjacent to $v$. Note as before $\vec{T}_{vw} = \frac{w-v}{|w-v|}$, and of course the radial edge has unit tangent $\vec{T}_v = v$. (This is just a restatement of stationarity, clarifying the notation for the exterior edges.) 
\item There are no crossings between interior edges. 
\end{enumerate}

We say that $V$ is a replacable network if it additionally satisfies the replacement property: 
\begin{enumerate}
\item[(3)] At each vertex $v$ in $V$, there is a replacement $V'_v$; that is, an admissible network $V'_v$ with exterior edges given by $\vec{T}_{vw}$ and multiplicity $m_{vw}$, for each $w\in N_v$. 
\end{enumerate}

Finally, we say that $V$ is a \textit{good network} if it is a replaceable network, each replacement $V'$ of $V$ is also replaceable, and so forth, so that $V$ has arbitrarily many iterated replacements. In fact we will only use four iterated replacements - two to rule out $N=3$, another to rule out $N=4$ and the fourth to rule out $N\geq 5$. 

\begin{lemma}
Let $f(N)$ be the maximum number of straight line segments that can be drawn between $N$ distinct points on the unit circle, which do not connect adjacent vertices, and do not have any crossings. Then $f(N)=\max(N-3,0)$. 

Consequently, the total number of interior edges in an admissible network is bounded by \[E\leq F(N) := \begin{cases} 2N-3&, N\geq 3\\ \max(N-1,0)&, N\leq 2 .\end{cases}.\] 
\end{lemma}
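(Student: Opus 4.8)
The plan is to reduce both statements to the standard combinatorics of triangulations of a convex polygon, exploiting that $N$ points on a circle are in convex position, so that the crossing pattern of chords depends only on the cyclic order of the points and not on their precise positions. Every chord joining two of the $N$ points is either a \emph{side} (joining cyclically adjacent points) or a \emph{diagonal} (joining non-adjacent points); the quantity $f(N)$ counts only non-crossing diagonals, whereas an admissible network may additionally use sides. For $N\le 2$ there are no non-adjacent pairs, so $f(N)=0$ trivially, and it remains to treat $N\ge 3$.

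For the upper bound $f(N)\le N-3$, I would fix a non-crossing family of $k$ diagonals and form the planar graph $G$ whose vertices are the $N$ points and whose edges are the $N$ polygon sides together with these $k$ diagonals. Since $G$ contains the boundary cycle it is connected and simple (at most one chord per pair, no loops), and it is drawn inside the disk without crossings. Euler's formula $V-E+F=2$ with $V=N$ and $E=N+k$ gives $F=k+2$, i.e. $k+1$ bounded faces together with the outer face. Each bounded face is bounded by at least three edges (the graph is $2$-connected, as it contains a spanning cycle), while the outer face is bounded by the $N$ sides; counting edge--face incidences,
\[
2(N+k)=\sum_{\text{faces}}\deg(f)\ \ge\ N+3(k+1),
\]
which rearranges to $k\le N-3$. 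The matching lower bound comes from the fan triangulation: joining one fixed vertex to each of the other $N-3$ non-adjacent vertices yields $N-3$ pairwise non-crossing diagonals. Hence $f(N)=N-3$ for $N\ge 3$, and $f(N)=\max(N-3,0)$ in general.

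To bound the interior edges $E$ of an admissible network I would then invoke admissibility condition (2) — no crossings between interior edges — to split the interior edges according to the side/diagonal dichotomy. The number of diagonals is at most $f(N)=\max(N-3,0)$, while the number of sides is at most the number of cyclically adjacent pairs, which is $N$ when $N\ge 3$, is $1$ when $N=2$, and is $0$ when $N\le 1$. Summing, $E\le N+(N-3)=2N-3$ for $N\ge 3$ and $E\le\max(N-1,0)$ for $N\le 2$, which is exactly $F(N)$.

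I expect the only genuinely delicate points to be bookkeeping ones: verifying that $G$ is simple, so that the face-degree bound $\deg\ge 3$ is valid (ruling out repeated or degenerate chords, which the multiplicity notation $m_{vw}$ already absorbs into a single weighted edge), and treating the small cases $N\le 2$ separately, since there the count of adjacent pairs differs from $N$. The Euler-characteristic estimate itself is routine once the graph has been set up correctly.
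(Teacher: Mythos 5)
Your proof is correct, but it takes a genuinely different route from the paper's for the key bound $f(N)\le N-3$. The paper argues recursively: any admissible diagonal splits the remaining $N-2$ vertices into arcs of $k$ and $l\ge 1$ vertices, giving $f(N)=\max\{1+f(k+2)+f(l+2): k+l=N-2,\ k,l\ge 1\}$, and then induction from the base cases $f(1)=f(2)=f(3)=0$ yields $f(N)=N-3$; note this splitting-along-a-chord technique is reused almost verbatim in Lemma \ref{lem:deg1}, so the recursive viewpoint earns its keep later in the paper. You instead add the $N$ polygon sides to a maximal non-crossing family of $k$ diagonals and run Euler's formula with a face-degree count, obtaining $k\le N-3$ in one stroke, together with an explicit fan-triangulation construction for the matching lower bound (which the paper subsumes into the equality in its recursion rather than exhibiting directly). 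Your approach is the standard convex-polygon triangulation bound and is arguably cleaner as a standalone statement, at the cost of the bookkeeping you correctly flag: connectivity and $2$-connectedness via the spanning boundary cycle (needed so every edge borders two faces and every bounded face has degree at least $3$), simplicity of $G$ (automatic here, since points on a circle are in convex position, no three are collinear, and multiplicities are absorbed into edge weights), and the degenerate cases $N\le 2$. For the ``Consequently'' part both arguments coincide: interior edges split into at most $N$ adjacent pairs (respectively $1$ for $N=2$, $0$ for $N\le 1$) plus at most $f(N)$ diagonals, and your case check against $F(N)$ is accurate; the paper leaves this step implicit, so spelling it out as you do is a small improvement in completeness.
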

\begin{proof}
Any such edge divides the remaining vertices into a set of $k$ vertices and a set of $l$ vertices, $k,l\geq 1$. Then we have the recursive formula \[f(N) = \max\{1+f(k+2) +f(l+2) | k+l=N-2, k,l\geq 1\}.\] It is clear that $f(1)=f(2)=f(3)=0$. A straightforward induction then shows that $f(N)= N-3$ for all $N\geq 3$. 
\end{proof}

Note that if $V$ is an admissible network and any vertex $v$ has degree 1 in $\mathring{V}$, then the interior edge must be the diameter through $v$. Since no interior edges may cross, this implies that at most two vertices can have degree 1 ($v$ and its antipode). Indeed, we have

\begin{lemma}
Let $V$ be an admissible network and a vertex $v$ of (interior) degree 1. Then the number of interior edges is bounded by \[E\leq F_1(N) = \begin{cases} 2N-5 &, N\geq 4 \\ 1 &, N\leq 3\end{cases}.\]
\label{lem:deg1}
\end{lemma}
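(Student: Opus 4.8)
The plan is to exploit the observation recorded just before the statement: stationarity at $v$ (admissibility (1)) forces the unique interior edge at $v$ to be the diameter joining $v$ to its antipode $w=-v$, with $m_v=m_{vw}$. I would then slice the configuration along this diameter. Deleting $v$ and $w$, the remaining $N-2$ vertices are separated by the diameter into two circular arcs, carrying vertex sets $P$ and $Q$ of sizes $k$ and $l$ with $k+l=N-2$. Since interior edges cannot cross (admissibility (2)) and the diameter is one of them, every other interior edge lies strictly on one side; and since $v$ has interior degree $1$, no edge other than the diameter meets $v$. Thus the interior edges fall into three families: the diameter itself, the chords among the $k+1$ points $\{w\}\cup P$, and the chords among the $l+1$ points $\{w\}\cup Q$.

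First I would dispose of the generic case $k,l\geq 1$. Each of the latter two families is a set of pairwise non-crossing chords on points in convex position, so by the preceding Lemma (which bounds the number of non-crossing chords among $m$ circle points by $F(m)$) it has at most $F(k+1)=\max(2k-1,0)$ and $F(l+1)=\max(2l-1,0)$ members respectively. Adding the diameter gives
\[ E \leq 1 + (2k-1) + (2l-1) = 2(k+l)-1 = 2N-5, \]
which is exactly the claimed bound.

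The main obstacle is the degenerate case in which one arc is empty, say $l=0$, so that all $N-2$ of the remaining vertices lie strictly on a single open semicircle cut off by the diameter. Here the naive split is too lossy: it only yields $E\leq 1 + F(N-1) = 2N-4$, one edge too many. To recover the sharp bound I would invoke stationarity at the antipode $w$. Choosing a unit normal $\hat n$ to the diameter, one has $\langle v,\hat n\rangle = \langle w,\hat n\rangle = 0$, so the radial edge at $w$ and the diameter edge $wv$ both have zero $\hat n$-component; meanwhile $\langle p,\hat n\rangle$ has one fixed nonzero sign for every $p\in P$. Hence each hypothetical edge $wp$ would contribute $\langle \vec T_{wp},\hat n\rangle = \langle p,\hat n\rangle/|p-w|$ of that same sign to the force balance at $w$, and with positive weights these terms cannot sum to zero. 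Therefore $w$ has no neighbour in $P$, i.e. $w$ also has interior degree $1$, and the interior edges reduce to the diameter together with chords among the $N-2$ points of $P$ alone. The preceding Lemma then bounds these by $F(N-2)$, so $E \leq 1 + F(N-2) = 2N-6 \leq 2N-5$.

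Finally, the small cases are subsumed by the same analysis: for $N=3$ the single extra vertex necessarily lies on an open semicircle, so the degenerate argument forces $\deg w=1$ and $E=1=F_1(3)$, while $N\leq 2$ is immediate. The only delicate ingredient is the degenerate case; the crux there is that piling all vertices onto one side of the diameter makes the transverse force balance at $w$ unsatisfiable unless $w$, like $v$, meets only the diameter.
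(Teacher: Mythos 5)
Your proposal is correct and follows essentially the same route as the paper: the diameter through $v$ splits the configuration, the counting lemma bounds each side by $F(k+1)$ and $F(l+1)$, and stationarity at the antipode $w$ handles the lopsided case. Your transverse force-balance at $w$ is just the contrapositive of the paper's observation that an edge from $w$ into one side forces an edge into the other (which the paper uses to justify $k,l\geq 1$ when $\deg w\geq 2$), with the normal projection making that one-line claim explicit; the resulting two-case organization, versus the paper's three cases $k=0$, $k=1$, $2\leq k\leq l$, is an equivalent bookkeeping choice.
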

\begin{proof}
As above, the interior edge from $v$ must be a diameter of the circle and its antipode $w$ must be a vertex in $V$. The diameter $vw$ splits the remaining vertices into two sets of $k$ and $l$ vertices, where $k+l=N-2$ and without loss of generality $0\leq k\leq l$. If $w$ also has degree 1, then $E\leq 1+F(k)+F(l)$. 

Otherwise, $w$ has a second incident edge with positive weight, so to satisfy stationarity it must be connected by a third edge to the other side of $vw$. In particular we must have $k,l\geq 1$, and $E\leq 1+F(k+1)+F(l+1)$. 

Thus we have three cases: $k=0$, in which case $w$ must have degree 1 and \[E\leq 1 + F(N-2);\] $k=1$, in which case $N\geq 4$ and \[E \leq  1+ F(2)+F(N-2)=2 + F(N-2);\] finally $2 \leq k\leq l$ in which case \[E\leq 1+F(k+1)+F(l+1)= 2N-5.\] The result follows by the cases for $F(N-2)$. 
\end{proof}

\subsection{The case $N=3$.}

Let $V$ be an admissible network with vertices $v_j = e^{i\theta_j}$. 


Set $\alpha_{jk} = \theta_k-\theta_i$. Note that \[\vec{T}_{jk} = \frac{e^{i\theta_k} - e^{i\theta_j}}{|e^{i\theta_k} - e^{i\theta_j}|} = e^{i\theta_j} \frac{e^{i\alpha_{jk}}-1}{|e^{i\alpha_{jk}}-1|}.\] Also note that for $\theta \in [0,2\pi]$, we have \[e^{i\theta}-1 =2\sin \frac{\theta}{2} ie^{i\theta/2} =2\sin \frac{\theta}{2} e^{i\frac{\theta+\pi}{2}} ,\]  \[e^{-i\theta}-1 =-2\sin \frac{\theta}{2} ie^{-i\theta/2}.\]  

Then stationarity at each vertex $v_j$ gives (after dividing through by $e^{i\theta_j}$ respectively)
\begin{equation}\label{eq:S1} m_1 + m_{12}ie^{i\alpha_{12}/2}+m_{13}ie^{i\alpha_{13}/2}=0, \end{equation}
\begin{equation}\label{eq:S2} m_2 -m_{12}ie^{-i\alpha_{12}/2}+m_{23}ie^{i\alpha_{23}/2}=0, \end{equation}
\begin{equation}\label{eq:S3} m_3 -m_{13}ie^{-i\alpha_{13}/2}-m_{23}ie^{-i\alpha_{23}/2}=0. \end{equation}

Note that all vertices must have degree 2 (that is, $m_{jk}>0$). (Otherwise, exactly one vertex has degree 1, but then $2e = \sum \deg(v) = 5$ which is impossible.) 

For each vertex $v_j$, it is geometrically clear that the other two vertices cannot lie on the same side of the diameter through $v_j$, or else it would be impossible to satisfy the stationarity. Therefore $\alpha_{12} \in (0,\pi)$, $\alpha_{23}\in(0,\pi)$, $\alpha_{13} \in (\pi,2\pi)$. (In particular $\alpha_{12}\neq \pi$, since then the only way to satisfy stationarity at $v_1$ would be $m_{13}=0$, which cannot happen; and similarly $\alpha_{23}, \alpha_{13}\neq \pi$.)

Set $s_{jk} = \sin \frac{\alpha_{jk}}{2}$ and $c_{jk}=\cos\frac{\alpha_{jk}}{2}$. Note $s_{jk}, c_{12},c_{23}, -c_{13} \in (0,1)$. 

We may rewrite the real part of the system above as 
\begin{equation}
\label{eq:real}
\begin{pmatrix} m_1 \\ m_2 \\ m_3 \end{pmatrix} + \begin{pmatrix} -s_{12} & -s_{13} & 0 \\ -s_{12} & 0 &   -s_{23} \\ 0 & -s_{13} & -s_{23} \end{pmatrix} \begin{pmatrix} m_{12} \\ m_{13} \\ m_{23} \end{pmatrix} =0 
\end{equation}
and the imaginary part as
\begin{equation}
\begin{pmatrix} c_{12} & c_{13} & 0 \\ -c_{12} & 0 & c_{23} \\ 0 & -c_{13} & -c_{23} \end{pmatrix} \begin{pmatrix} m_{12} \\ m_{13} \\ m_{23} \end{pmatrix} =0.
\end{equation}

Since the $c_{ij}$ are nonzero, the matrix $C=\begin{pmatrix} c_{12} & c_{13} & 0 \\ -c_{12} & 0 & c_{23} \\ 0 & -c_{13} & -c_{23} \end{pmatrix}$ has rank 2, nullity 1 and one can verify that the kernel is spanned by $\begin{pmatrix} c_{13}c_{23} \\ -c_{12}c_{23} \\ c_{12}c_{13}\end{pmatrix}$.

\begin{lemma}
\label{lem:rational}
Let $V$ be an admissible network with $N=3$. Then $e^{i\alpha_{jk}}$ are rational points on the unit circle. 
\end{lemma}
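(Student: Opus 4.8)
The plan is to show that the half-angle quantities $c_{jk}=\cos\frac{\alpha_{jk}}{2}$ and $s_{jk}=\sin\frac{\alpha_{jk}}{2}$ are all rational; since $\cos\alpha_{jk}=2c_{jk}^2-1$ and $\sin\alpha_{jk}=2s_{jk}c_{jk}$, this immediately yields that each $e^{i\alpha_{jk}}$ is a rational point on $S^1$. The structural fact I would exploit is that, because $\alpha_{jk}=\theta_k-\theta_j$, one has $\alpha_{13}=\alpha_{12}+\alpha_{23}$, so the half-angles obey the addition relations $c_{13}=c_{12}c_{23}-s_{12}s_{23}$ and $s_{13}=s_{12}c_{23}+c_{12}s_{23}$. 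The strategy is to extract rationality of suitable combinations of $c_{jk},s_{jk}$ from the integrality of the weights, and then feed these into the addition formulas.

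First I would use the imaginary part of the stationarity system. Since the weight vector $(m_{12},m_{13},m_{23})$ must lie in the one-dimensional kernel of $C$, it is proportional to $(c_{13}c_{23},-c_{12}c_{23},c_{12}c_{13})$; comparing integer ratios shows that $p:=c_{23}/c_{12}$ and $q:=c_{13}/c_{12}$ are rational, with $p>0$ and $q<0$ by the sign conventions already recorded. Next, substituting $m_{12}=-q\,m_{13}$ and $m_{23}=-(q/p)\,m_{13}$ into the real part (\ref{eq:real}) and dividing by $m_{13}\neq 0$, the integrality of the $m_j$ gives that $A:=p\,s_{12}+s_{23}$ and $B:=s_{13}-q\,s_{12}$ are rational.

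The last ingredient is to combine these rational data with the addition formulas. The sine addition formula reads $s_{13}=c_{12}(p\,s_{12}+s_{23})=c_{12}A$; squaring and using $s_{13}^2=1-c_{13}^2=1-q^2c_{12}^2$ gives $c_{12}^2(A^2+q^2)=1$, so $c_{12}^2\in\mathbb{Q}$. The cosine addition formula, after eliminating $s_{23}=A-p\,s_{12}$ and $s_{12}^2=1-c_{12}^2$, collapses to the linear relation $A\,s_{12}=p-q\,c_{12}$; squaring this and again invoking $c_{12}^2(A^2+q^2)=1$ cancels the quadratic term and leaves $c_{12}=(1+p^2-A^2)/(2pq)$, which is rational. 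From $c_{12}\in\mathbb{Q}$ the remaining quantities follow: $c_{23}=p\,c_{12}$, $c_{13}=q\,c_{12}$, $s_{12}=(p-q\,c_{12})/A$, $s_{23}=A-p\,s_{12}$, and $s_{13}=c_{12}A$ are all rational, completing the proof.

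I expect the main obstacle to be exactly this last upgrade from rationality of squares to rationality of the quantities themselves. The sine formula alone only delivers $c_{12}^2\in\mathbb{Q}$, which is insufficient to conclude that $\sin\alpha_{12}=2s_{12}c_{12}$ is rational. The decisive point is that the real-part (stationarity) equations supply an \emph{odd-degree} relation $A\,s_{12}=p-q\,c_{12}$; pairing this linear relation with the quadratic $c_{12}^2(A^2+q^2)=1$ is what pins down $c_{12}$ itself. A minor but necessary bookkeeping step is to verify that all denominators are nonzero, i.e. $p,q\neq 0$ and $A=p\,s_{12}+s_{23}>0$, which is immediate from the angle ranges $\alpha_{12},\alpha_{23}\in(0,\pi)$ and $\alpha_{13}\in(\pi,2\pi)$ established before the lemma.
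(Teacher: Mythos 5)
Your proof is correct, and the delicate points all check out: $p=m_{12}/m_{23}$ and $q=-m_{12}/m_{13}$ are indeed rational and nonzero (using $\alpha_{jk}\neq\pi$ as established before the lemma), the identity $s_{13}=c_{12}A$ and the collapse of the cosine addition formula to the linear relation $A\,s_{12}=p-q\,c_{12}$ are both valid consequences of $\alpha_{13}=\alpha_{12}+\alpha_{23}$, and $A>0$ justifies the final divisions. Your inputs are the same as the paper's --- the kernel vector $(c_{13}c_{23},-c_{12}c_{23},c_{12}c_{13})$ of the imaginary-part matrix together with integrality of the weights in the real-part system --- but the endgame is genuinely different. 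The paper avoids your elimination entirely: substituting the kernel vector into (\ref{eq:real}) and simplifying with the addition formulae gives $-m_1/\beta=c_{23}s_{23}$, $-m_2/\beta=-c_{13}s_{13}$, $-m_3/\beta=c_{12}s_{12}$, and then a single multiplicative quotient such as $\frac{m_1 m_{23}}{m_{12}m_{13}}=\tan\frac{\alpha_{23}}{2}$ shows each $\tan\frac{\alpha_{jk}}{2}$ is rational, whence $e^{i\alpha_{jk}}\in\mathbb{Q}(i)$ by the rational parametrization of the circle. Your route is longer but buys a strictly stronger conclusion: you show each $c_{jk}$ and $s_{jk}$ is itself rational, i.e.\ the half-angle points $e^{i\alpha_{jk}/2}$ are already rational points, which the tangent-quotient argument does not give (e.g.\ $\tan\frac{\alpha}{2}=1$ has irrational half-angle data but rational $e^{i\alpha}$). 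Your closing remark correctly identifies the crux: the quadratic information $c_{12}^2(A^2+q^2)=1$ alone cannot determine $c_{12}$, and the odd-degree relation $A\,s_{12}=p-q\,c_{12}$ extracted from stationarity is what pins it down; the paper sidesteps this issue because quotients of the six integers directly produce the odd quantity $s_{jk}/c_{jk}$. For the application (ruling out $N=3$ via the field structure of $\mathbb{Q}(i)$ under iterated replacements) only $e^{i\alpha_{jk}}\in\mathbb{Q}(i)$ is needed, so both proofs serve equally well there, though yours records more.
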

\begin{proof}
By the characterisation of the kernel, we have $\begin{pmatrix} m_{12} \\ m_{13}\\ m_{23}\end{pmatrix}= \beta \begin{pmatrix} c_{13}c_{23} \\ -c_{12}c_{23} \\ c_{12}c_{13}\end{pmatrix}$ for some $\beta\neq 0$. The plugging into (\ref{eq:real}) we have 

\begin{equation}
\begin{split}
-\frac{1}{\beta}\begin{pmatrix}m_1 \\ m_2 \\ m_3 \end{pmatrix} &= \begin{pmatrix} -s_{12} & -s_{13} & 0 \\ -s_{12} & 0 &   -s_{23} \\ 0 & -s_{13} & -s_{23} \end{pmatrix} \begin{pmatrix} c_{13}c_{23} \\ -c_{12}c_{23} \\ c_{12}c_{13}\end{pmatrix} 
\\&= \begin{pmatrix} c_{23}s_{23} \\ -c_{13}s_{13} \\ c_{12}s_{12}\end{pmatrix},
\end{split}
\end{equation}
where in the last line we have used the trigonometric addition formulae. Considering the quotients $\frac{m_1 m_{23}}{m_{12}m_{13}}$ and so forth, it follows that each $\tan \frac{\alpha_{jk}}{2}$ is rational and hence $e^{i\alpha_{jk}}$ is a rational point. 
\end{proof}

\begin{proposition}
There is no good network $V$ with $N=3$.
\end{proposition}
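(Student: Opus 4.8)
The plan is to run an infinite descent on the angular gaps, driven by the replacement operation, and to close it with the arithmetic of rational points on the circle. Throughout I normalise $\theta_1=0$, so that the three gaps of a good $N=3$ network are $\alpha_{12}=\theta_2\in(0,\pi)$, $\alpha_{13}=\theta_3\in(\pi,2\pi)$ and $\alpha_{23}=\theta_3-\theta_2\in(0,\pi)$, as established above (recall also that every vertex has degree $2$). By Lemma \ref{lem:rational} every gap of \emph{every} such network is a rational point, i.e. $e^{i\alpha_{jk}}$ has rational real and imaginary parts (equivalently $\tan\tfrac{\alpha_{jk}}{2}\in\mathbb{Q}$). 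The point of the hypothesis that $V$ is \emph{good} is that this rationality persists through all of its iterated replacements.

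First I would record the effect of a single replacement on the gaps. Fix a vertex $v$ with neighbours $w_1,w_2$. The boundary data at $v$ consists of the radial edge along $v$ together with the two interior edges, so the replacement $V'_v$ has three exterior rays — along $v$, $\vec{T}_{vw_1}$ and $\vec{T}_{vw_2}$ (the radial ray must be retained, as the two interior rays alone cannot balance) — and is therefore again an admissible $N=3$ network. Using $\arg\vec{T}_{vw}=\tfrac{\theta_v+\theta_w}{2}+\tfrac{\pi}{2}$, the two rays $\vec{T}_{vw_1},\vec{T}_{vw_2}$ enclose the angle $\tfrac12|\theta_{w_1}-\theta_{w_2}|=\tfrac12\alpha_{w_1w_2}$. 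Moreover $\arg\vec{T}_{vw_i}-\theta_v\in(\tfrac{\pi}{2},\tfrac{3\pi}{2})$, so both interior rays lie strictly behind $v$ and the radial ray along $v$ does not lie between them; hence $\tfrac12\alpha_{w_1w_2}$ is genuinely one of the three gaps of $V'_v$. In short, \emph{replacing at $v$ halves the gap between the two neighbours of $v$}, and the halved gap reappears as a gap of $V'_v$.

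Next I would iterate. Replacing the original network at $v_1$ produces a good $N=3$ network $V^{(1)}$ one of whose gaps equals $\tfrac12\alpha_{23}$; the two vertices carrying this gap are exactly the neighbours of the third (radial) vertex of $V^{(1)}$, so replacing $V^{(1)}$ at that vertex halves $\tfrac12\alpha_{23}$ again, and so on. A short check shows the gap pattern $(0,\pi),(0,\pi),(\pi,2\pi)$ is preserved at each stage, so every $V^{(k)}$ is a bona fide $N=3$ network; since a good network admits arbitrarily many iterated replacements, I obtain for every $k\ge 0$ a good $N=3$ network with a gap equal to $\alpha_{23}/2^{k}$. Applying Lemma \ref{lem:rational} at each stage shows that $e^{i\alpha_{23}/2^{k}}$ is a rational point for \emph{all} $k$.

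Finally I would invoke the arithmetic of the circle. The rational points of $S^1$ form a multiplicative group $R$, and by unique factorisation in the Gaussian integers $R\cong(\mathbb{Z}/4)\oplus\mathbb{Z}^{(\infty)}$, with torsion exactly the fourth roots of unity $\{\pm1,\pm i\}$. Writing $w_k=e^{i\alpha_{23}/2^{k}}\in R$ we have $w_k^2=w_{k-1}$, so $w_0=e^{i\alpha_{23}}$ is divisible by $2^{k}$ in $R$ for every $k$. In $(\mathbb{Z}/4)\oplus\mathbb{Z}^{(\infty)}$ the only infinitely $2$-divisible element is the identity — the free part has no nonzero infinitely $2$-divisible element, and $2^{k}$ annihilates $\mathbb{Z}/4$ once $k\ge 2$ — so $w_0=1$ and $\alpha_{23}\in 2\pi\mathbb{Z}$, contradicting $\alpha_{23}\in(0,\pi)$. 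Hence no good network with $N=3$ exists. The main obstacle is precisely this last, number-theoretic step: one must notice that the replacements force a \emph{fixed} rational point to admit rational $2^{k}$-th roots for all $k$, and then rule this out via the group structure of $R$ (equivalently, by an infinite descent on the denominators of the associated primitive Pythagorean triples). The geometric bookkeeping — that each replacement halves one particular gap and preserves the admissible $N=3$ structure, so that Lemma \ref{lem:rational} keeps applying — is routine but must be carried out carefully so that the halved gap is always an honest gap of the next network.
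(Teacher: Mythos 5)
Your proof is correct, and it shares the paper's skeleton --- Lemma \ref{lem:rational} together with the computation that a replacement at a vertex halves the (non-reflex) angular gap between its two neighbours --- but it closes the argument in a genuinely different way. The paper stops after two double replacements: it forms $V''_{11}$ and $V''_{33}$, observes $\alpha^{(11)}_{12}=\frac{\alpha_{12}+3\pi}{4}$ while $\alpha^{(33)}_{12}=\frac{\alpha_{12}}{4}$, and, since $\mathbb{Q}(i)$ is a field, concludes that $e^{3\pi i/4}=\frac{1}{\sqrt 2}(-1+i)$ would be a rational point, which is absurd; so its arithmetic input is just the irrationality of $\sqrt 2$, and only replacement depth two is used (consistent with the paper's remark that four iterated replacements suffice for the whole of Section \ref{S:combinatorial argument}). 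You instead track the single gap $\alpha_{23}$ through arbitrarily deep chains of replacements at the persistent vertex $v_1$, obtaining rational points $e^{i\alpha_{23}/2^k}$ for every $k$, and then invoke the structure of the group of rational points on $S^1$ (isomorphic to $\mathbb{Z}/4\oplus\mathbb{Z}^{(\infty)}$ via unique factorisation in $\mathbb{Z}[i]$), in which the only infinitely $2$-divisible element is the identity, contradicting $\alpha_{23}\in(0,\pi)$. Your bookkeeping is sound: the radial ray is indeed retained (exactly as in the paper's proof, where $v'_1=v_1$); the three exterior directions are distinct because $\arg\vec{T}_{vw}-\theta_v\in(\tfrac{\pi}{2},\tfrac{3\pi}{2})$, which also guarantees the halved gap is an honest gap; admissibility forces every $N=3$ network to be a triangle with the stated gap pattern; and the definition of a good network licenses arbitrarily deep finite chains of replacements, which is all your descent needs (no actually infinite object is required). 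The trade-off: your route consumes the full strength of goodness and a heavier, though standard, number-theoretic fact (equivalently, a descent on denominators of Pythagorean triples), whereas the paper exploits the $+\pi$ shifts in the other two gaps to terminate after finitely many replacements with a minimal irrationality statement; in exchange, your argument needs only the pure halving of one gap and no comparison between two different replacement sequences.
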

\begin{proof}
Suppose $V$ is a good network with $N=3$. First take a replacement $V'_1$ of $V$ at $v_1$. Using that $e^{i\theta}-1 = 2\sin\frac{\theta}{2} e^{i\frac{\theta+\pi}{2}}$ for $\theta\in[0,2\pi]$, the replacement network $V'_1$ should have vertices $v'_1 = v_1$, $v'_2 = \vec{T}_{12}$, $v'_3 = \vec{T}_{13}$ (up to a coordinate rotation); the corresponding angle differences are $\alpha^{(1)}_{12} = \frac{\alpha_{12}+\pi}{2}$, $\alpha^{(1)}_{13} = \frac{\alpha_{13}+\pi}{2}$, $\alpha^{(1)}_{23} = \frac{\alpha_{23}}{2}$. Now consider the iterated replacement $V''_{11}$ at $v'_1$; then the angle differences will become $\alpha^{(11)}_{12} = \frac{\alpha_{12}+3\pi}{4}$, $\alpha^{(11)}_{13} = \frac{\alpha_{13}+3\pi}{4}$, $\alpha^{(11)}_{23} = \frac{\alpha_{23}}{4}$. 

Apply the same twice iterated replacement at $v_3$; then in particular the (non-reflex) opposite angle difference halves twice, so the resulting network $V''_{33}$ will have $\alpha^{(33)}_{12} = \frac{\alpha_{12}}{4}$. By Lemma \ref{lem:rational} we must have $e^{i\alpha}\in\mathbb{Q}(i)$ for each of these $\alpha$. Since $\mathbb{Q}(i)$ is a field this implies that $\exp(i(\alpha^{(11)}_{12} - \alpha^{(33)}_{12})) = e^{3i\pi/4} = \frac{1}{\sqrt{2}}(-1+i)$ is a rational point, which is absurd. 
\end{proof}

\subsection{General $N$}

Recall that $E,\deg,\cdots$ denote the edges, degree, etc. of the interior graph $\mathring{V}$.

\begin{proposition}
There is no good network $V$ with $N=4$. 
\end{proposition}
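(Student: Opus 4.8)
The plan is to rule out $N=4$ by reducing, through a single replacement, to the already-settled case $N=3$, supplemented by a direct treatment of the remaining $4$-cycle configuration. The organizing principle is that a replacement $V'_v$ at a vertex $v$ of interior degree $d$ is, by definition, an admissible network whose exterior edges point in the directions $\vec{T}_{vw}$ for $w\in N_v$; hence $V'_v$ has exactly $d$ vertices, located at these $d$ points of $S^1$. Moreover, if $V$ is good then each $V'_v$ is again good, since by definition every iterated replacement of $V$ is itself replaceable. Thus a replacement trades the vertex count $N$ for the degree $d$ of the replaced vertex, and I can feed small values of $d$ into the base cases $N\leq 3$.

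First I would record two base facts by one-line stationarity computations. There is no admissible network with $N=1$: a single vertex $u\in S^1$ carries only its radial edge, so stationarity would demand $m_u u=0$, impossible. Consequently a \emph{replaceable} network can have no vertex of interior degree $1$, since the replacement at such a vertex would be an admissible $N=1$ network (this also recovers the degree-$1$ restriction of Lemma \ref{lem:deg1}). In particular every vertex of a good $N=4$ network has degree $\geq 2$. Combining this with the handshake identity $\sum_v\deg v=2E$, the no-crossing condition, and the edge bound $E\leq F(4)=5$, one gets $E\in\{4,5\}$, and the only admissible interior graphs $\mathring V$ are the degree sequences $(2,2,2,2)$ and $(3,3,2,2)$: the sequence $(3,3,3,3)$ is $K_4$, whose diagonals cross, and all other sequences either have odd degree sum or a degree-$1$ vertex.

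For the sequence $(3,3,2,2)$ there is a vertex $v$ of degree $3$; then $V'_v$ is a good network with exactly three vertices, contradicting the preceding Proposition. For $(2,2,2,2)$ the graph $\mathring V$ is forced to be the convex $4$-cycle (the unique crossing-free $2$-regular simple graph on four circle points). I would eliminate this by the same reduction: a replacement at a vertex $v$ with neighbours $w_1,w_2$ is an admissible $N=2$ network with vertices at $\vec{T}_{vw_1},\vec{T}_{vw_2}$. But an admissible $N=2$ network must be a diameter, its two vertices antipodal, because stationarity at one vertex forces its single edge to point radially inward; this would require $w_1,v,w_2$ to be collinear. Three distinct points on a circle are never collinear, so no such replacement exists, the $4$-cycle is not replaceable, and hence not good. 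This exhausts all cases.

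The hard part will be the bookkeeping that pins down the admissible degree sequences: I must verify that the no-crossing hypothesis together with the edge bounds really leaves only the two graphs above once degree-$1$ vertices are excluded, and confirm that "good" genuinely propagates to each replacement so the $N=3$ Proposition applies verbatim. Everything else—the nonexistence of admissible networks with $N\leq 1$ and the antipodal characterization of $N=2$ networks—is immediate from stationarity, so I expect this classification step to be the main, if modest, obstacle.
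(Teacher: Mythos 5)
There is a genuine gap, and it is concentrated in one place: your count of the vertices of a replacement is off by one. When the paper replaces at a vertex $v$ of interior degree $d$, the exterior rays of the replacement $V'_v$ consist not only of the directions $\vec{T}_{vw}$, $w\in N_v$, but also of the radial direction $\vec{T}_v=v$ with multiplicity $m_v$ (geometrically, the blow-up at $v$ must match \emph{all} edges incident to $v$, including the exterior radial edge, which always carries positive weight). This is unambiguous from the paper's own use of the notion: in the $N=3$ argument the replacement $V'_1$ has vertices $v'_1=v_1$, $v'_2=\vec{T}_{12}$, $v'_3=\vec{T}_{13}$, and in the final theorem degree $2$ and degree $3$ are excluded using the $N=3$ and $N=4$ propositions respectively. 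So a replacement at a degree-$d$ vertex is a good network with $N=d+1$ vertices, not $d$. This breaks your argument at three points. First, your exclusion of degree-$1$ vertices fails: the replacement at a degree-$1$ vertex is a two-ray configuration ($v$ and $\vec{T}_{vw}$), which is realized by the admissible straight-line network exactly when the incident edge is a diameter --- and stationarity at $v$, namely $m_v v+m_{vw}\vec{T}_{vw}=0$, forces precisely that. Indeed Lemma \ref{lem:deg1} \emph{accommodates} degree-$1$ vertices rather than forbidding them, so your reduction to the degree sequences $(2,2,2,2)$ and $(3,3,2,2)$ omits sequences containing $1$'s, which must be ruled out by counting (the paper does this via $E\leq F_1(4)=3$ against the handshake bound $E\geq \frac{1}{2}(2+3\cdot 2)=4$). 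Second, and fatally, in your $(3,3,2,2)$ case the replacement at a degree-$3$ vertex has $N=4$ vertices, so invoking the nonexistence of good $N=4$ networks is circular; degree $3$ is only excluded in the paper's Theorem \ref{T:main result2} (for $N\geq 5$), \emph{using} the $N=4$ proposition you are trying to prove. Third, in your $(2,2,2,2)$ case the replacement at a degree-$2$ vertex is an $N=3$ network, so your ``admissible $N=2$ forces antipodal, hence collinearity'' contradiction evaporates; the correct conclusion does follow, but via the $N=3$ proposition.

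For comparison, the paper's proof avoids any case enumeration of graphs: degree $2$ is excluded because a replacement there would be a good $N=3$ network; then either some vertex has degree $1$, in which case $E\leq F_1(4)=3$ contradicts the handshake count $E\geq 4$ (at most two degree-$1$ vertices, since two distinct diameters cross, and the rest have degree $\geq 3$), or all degrees are $\geq 3$, in which case $\sum\deg(v)\geq 12$ contradicts $2E\leq 2F(4)=10$. Your proposal's underlying strategy (reduce $N=4$ to smaller $N$ via replacements plus counting) is the paper's, but as written the degree-$3$ reduction is circular and the degree-$1$ exclusion is false, so the proof does not go through without essentially substituting the paper's counting argument.
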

\begin{proof}
Suppose $V$ is a good network with $N=4$. By the previous proposition, there are no good networks with $N=3$, so by taking a replacement we see that each vertex $v$ in $\mathring{V}$ cannot have degree equal to 2. 

If there is a vertex with degree 1, then by Lemma \ref{lem:deg1}, we have that $E\leq F_1(4)=3$. But since there are at most two vertices with degree 1, and the remaining vertices must have degree at least 3, we have \[E=\frac{1}{2}\sum\deg(v)  \geq \frac{1}{2}(3(N-2)+2) = 4,\] which is a contradiction. 

On the other hand, if there are no vertices with degree 1, then they all have degree at least 3, so $\sum \deg(v) \geq 3N = 12$, but this contradicts the earlier bound \[\sum \deg v = 2E \leq 2F(4) = 10.\] 
\end{proof}

\begin{theorem}
\label{T:main result2}
The only good network $V$ is the straight-line network which has $N=2$, diametrically opposite vertices and equal multiplicities. 
\end{theorem}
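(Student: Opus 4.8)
The plan is to prove Theorem \ref{T:main result2} by combining the preceding structural results to rule out every case $N\geq 3$, and then to verify that $N=2$ forces the straight-line network. The three propositions already dispose of $N=3$ and $N=4$, so the remaining work is the case $N\geq 5$, which the introduction flags as requiring the fourth iterated replacement. First I would record the baseline observation, used repeatedly above, that in a \emph{good} network every vertex $v$ of $\mathring{V}$ must have interior degree at least $3$: if $v$ had degree $2$, taking a replacement at $v$ would produce a good network on $N=3$ vertices, contradicting the $N=3$ proposition; and if $v$ had degree $1$, the edge from $v$ is forced to be a diameter, so at most two such vertices can occur (namely $v$ and its antipode), after which the same degree-$2$ obstruction applies to the replacement.

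\medskip
\noindent\textbf{The counting step.} The heart of the $N\geq 5$ argument is a clash between a lower bound on $\sum_v \deg(v)$ coming from the good-network degree constraint and the upper bound $E\leq F(N)=2N-3$ from the first lemma (or $E\leq F_1(N)=2N-5$ from Lemma \ref{lem:deg1} when a degree-$1$ vertex is present). I would split into two cases exactly as in the $N=4$ proposition. If no vertex has degree $1$, then every vertex has degree at least $3$, so
\[ 2E=\sum_v \deg(v)\geq 3N, \]
while admissibility gives $2E\leq 2F(N)=4N-6$; these force $3N\leq 4N-6$, i.e. $N\geq 6$, so this subcase is not immediately contradictory and will need the sharper input below. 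If there is a degree-$1$ vertex (hence at most two of them), then the remaining $N-2$ vertices have degree at least $3$, giving
\[ 2E=\sum_v\deg(v)\geq 3(N-2)+2=3N-4, \]
against $2E\leq 2F_1(N)=4N-10$; this yields $3N-4\leq 4N-10$, i.e. $N\geq 6$ again.

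\medskip
\noindent\textbf{Closing the gap for large $N$.} Since crude counting only rules out small $N$, the main obstacle—and the reason a fourth replacement is invoked—is improving either the degree lower bound or the edge upper bound for $N\geq 5$. The approach I would take is to extract more from stationarity and planarity together: the key is that the interior graph is an outerplanar-type graph (all vertices on the circle, no crossings), for which Euler's formula on the induced planar subdivision gives a genuinely linear bound on $E$, and simultaneously the good-network condition forces the average degree strictly above the outerplanar ceiling. Concretely, a crossing-free graph on $N$ points of the circle, together with the $N$ boundary arcs, is a planar graph with at most $2N-3$ chords, and bounding faces via $2E\geq 3(\text{bounded faces})$ is where outerplanarity bites; pushing the degree-$3$ lower bound through this should contradict $E\leq 2N-3$ for all $N\geq 5$. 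I expect the delicate point to be handling the at-most-two degree-$1$ (diametral) vertices uniformly, so I would treat them by the refined bound $F_1(N)$ as above and confirm the inequalities are strict for $N\geq 5$ after incorporating the sharpened count; the replacement hypothesis guarantees we may always pass to a good network of the same $N$ to justify the degree-$3$ condition at every interior vertex.

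\medskip
\noindent\textbf{The base case $N=2$.} Finally, with all $N\geq 3$ excluded, I would verify that an admissible network with $N=2$ must be the straight line. Two vertices $v_1,v_2$ on $S^1$ each carry a radial exterior edge, and the only possible interior edge is the chord $v_1v_2$. Stationarity at $v_1$ reads $m_1 v_1 + m_{12}\vec{T}_{12}=0$, which forces $\vec{T}_{12}=-v_1$, i.e. $v_2=-v_1$ (diametrically opposite) and $m_{12}=m_1$; the symmetric equation at $v_2$ gives $m_{12}=m_2$, whence $m_1=m_2$. Thus $V$ is the integer-multiple straight line through the origin, completing the proof. The $N=1$ case is vacuous since a single radial edge cannot be stationary.
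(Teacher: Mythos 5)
There is a genuine gap in your treatment of $N\geq 5$, and it is exactly the step your own arithmetic flags: with only the degree lower bound $3$, the counts $2E\geq 3N$ (no degree-$1$ vertex) versus $2E\leq 2F(N)=4N-6$, and $2E\geq 3N-4$ versus $2E\leq 2F_1(N)=4N-10$, are consistent for every $N\geq 6$, so no contradiction results. The idea you are missing is that the replacement at a vertex $v$ of interior degree $d$ is itself a good network with $d+1$ exterior rays --- the tangents $\vec{T}_{vw}$ for $w\in N_v$ \emph{together with} the radial edge at $v$ itself, exactly as in the $N=3$ argument, where $V'_1$ has the three vertices $v_1$, $\vec{T}_{12}$, $\vec{T}_{13}$. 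Consequently the $N=4$ proposition does double duty: it excludes not only four-vertex good networks but also any vertex of interior degree $3$ in a larger good network (just as the $N=3$ proposition excludes degree $2$). Once every vertex of degree $\neq 1$ has degree at least $4$, both counts close immediately for all $N\geq 5$: if some vertex has degree $1$, then
\[ E=\tfrac{1}{2}\sum_v \deg(v) \geq \tfrac{1}{2}\bigl(4(N-2)+2\bigr)=2N-3 > 2N-5 = F_1(N), \]
while if no vertex has degree $1$, then $2E\geq 4N > 4N-6 = 2F(N)$. This is the ``fourth iterated replacement'' the introduction alludes to, and it is the entire content of the paper's proof beyond the $N=3,4$ propositions.

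Your proposed repair via Euler's formula cannot supply the missing strength in the form you describe: face-counting for a crossing-free chord system on $N$ circle points reproduces exactly the bound $E\leq 2N-3=F(N)$ already established in the first lemma, and average degree $3$ is compatible with that bound for all $N\geq 6$, so ``pushing the degree-$3$ lower bound through'' the edge count yields nothing new. There is a genuinely different graph-theoretic fact that \emph{would} rescue a degree-$3$ argument --- crossing-free chord graphs are subgraphs of polygon triangulations, which have at least two ears, so such a graph always has at least two vertices of degree at most $2$; combined with the $N=3$ proposition and a splitting argument along the diameter when both low-degree vertices are the antipodal degree-$1$ pair, this would also finish the proof --- but that is a statement about $2$-degeneracy, not about the edge bound, and it is not what your sketch invokes. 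As written, your $N\geq 5$ step does not go through. (Your base case $N=2$, including the verification that stationarity forces $v_2=-v_1$ and $m_1=m_{12}=m_2$, is correct and matches what the paper asserts without proof.)
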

\begin{proof}
It is clear that there is no admissible network with $N=1$, and the only admissible network with $N=2$ is the straight line configuration. 

Since we have proven there are no good networks with $N=3,4$, by taking replacements we have ruled out any vertex $v$ in $\mathring{V}$ having degree 2 or 3, and we may assume $N\geq 5$. 

If there is a vertex with degree 1, then by Lemma \ref{lem:deg1}, we have that $E\leq F_1(N)=2N-5$. But since there are at most two vertices with degree 1, and the remaining vertices must have degree at least 4, we have \[E=\frac{1}{2}\sum\deg(v)  \geq \frac{1}{2}(4(N-2)+2) = 2N-3,\] which is a contradiction. 

On the other hand, if there are no vertices with degree 1, then all vertices must have degree at least 4 and so $\sum \deg(v) \geq 4N$. But \[\sum\deg(v) = 2E \leq 2F(N) = 4N-6,\] which is a contradiction. 
\end{proof}

\begin{remark}
The stationarity of $V$ at each vertex automatically implies that:
\begin{enumerate}
\item $V$ is stationary at infinity, that is, \[\sum_v m_v v = 0.\] 
\item The mass of the interior graph $\mathring{V}$ is the same as the graph which extends the exterior rays into the origin, that is, \[ \sum_v m_v = \sum_{vw \in \mathring{V}} m_{vw} |v-w|.\] 
\end{enumerate}


\end{remark}

\bibliography{reference}

\begin{thebibliography}{10}

\bibitem{Aiex16}
Nicolau~S. Aiex.
\newblock The width of ellipsoids.
\newblock {\em arXiv preprint arXiv:1601.01032v2 (to appear in Communications
  of Analysis and Geometry)}, 2016.

\bibitem{Allard-Almgren76}
W.~K. Allard and F.~J. Almgren, Jr.
\newblock The structure of stationary one dimensional varifolds with positive
  density.
\newblock {\em Invent. Math.}, 34(2):83--97, 1976.

\bibitem{AF62}
Frederick~Justin Almgren, Jr.
\newblock The homotopy groups of the integral cycle groups.
\newblock {\em Topology}, 1:257--299, 1962.

\bibitem{AF65}
Frederick~Justin Almgren, Jr.
\newblock The theory of varifolds.
\newblock {\em Mimeographed notes, Princeton}, 1965.

\bibitem{Arnold04}
Vladimir~I. Arnold.
\newblock {\em Arnold's problems}.
\newblock Springer-Verlag, Berlin; PHASIS, Moscow, 2004.
\newblock Translated and revised edition of the 2000 Russian original, With a
  preface by V. Philippov, A. Yakivchik and M. Peters.

\bibitem{Birkhoff17}
George~D. Birkhoff.
\newblock Dynamical systems with two degrees of freedom.
\newblock {\em Trans. Amer. Math. Soc.}, 18(2):199--300, 1917.

\bibitem{Calabi-Cao92}
Eugenio Calabi and Jian~Guo Cao.
\newblock Simple closed geodesics on convex surfaces.
\newblock {\em J. Differential Geom.}, 36(3):517--549, 1992.

\bibitem{Colding-DeLellis03}
Tobias~H. Colding and Camillo De~Lellis.
\newblock The min-max construction of minimal surfaces.
\newblock In {\em Surveys in differential geometry, {V}ol.\ {VIII} ({B}oston,
  {MA}, 2002)}, volume~8 of {\em Surv. Differ. Geom.}, pages 75--107. Int.
  Press, Somerville, MA, 2003.

\bibitem{Ginzburg96}
Viktor~L. Ginzburg.
\newblock On closed trajectories of a charge in a magnetic field. {A}n
  application of symplectic geometry.
\newblock In {\em Contact and symplectic geometry ({C}ambridge, 1994)},
  volume~8 of {\em Publ. Newton Inst.}, pages 131--148. Cambridge Univ. Press,
  Cambridge, 1996.

\bibitem{KL18}
Daniel Ketover and Yevgeny Liokumovich.
\newblock On the existence of closed $c^{1,1}$ curves of constant curvature.
\newblock {\em arXiv preprint arXiv:1810.09308}, 2018.

\bibitem{MN17}
Fernando~C. Marques and Andr\'e Neves.
\newblock Existence of infinitely many minimal hypersurfaces in positive
  {R}icci curvature.
\newblock {\em Invent. Math.}, 209(2):577--616, 2017.

\bibitem{Pitts74}
Jon~T. Pitts.
\newblock Regularity and singularity of one dimensional stationary integral
  varifolds on manifolds arising from variational methods in the large.
\newblock pages 465--472, 1974.

\bibitem{Pitts81}
Jon~T. Pitts.
\newblock {\em Existence and regularity of minimal surfaces on {R}iemannian
  manifolds}, volume~27 of {\em Mathematical Notes}.
\newblock Princeton University Press, Princeton, N.J.; University of Tokyo
  Press, Tokyo, 1981.

\bibitem{Rosenberg-Schneider11}
Harold Rosenberg and Matthias Schneider.
\newblock Embedded constant-curvature curves on convex surfaces.
\newblock {\em Pacific J. Math.}, 253(1):213--218, 2011.

\bibitem{Schneider11}
Matthias Schneider.
\newblock Closed magnetic geodesics on {$S^2$}.
\newblock {\em J. Differential Geom.}, 87(2):343--388, 2011.

\bibitem{Simons68}
James Simons.
\newblock Minimal varieties in riemannian manifolds.
\newblock {\em Ann. of Math.}, 88(2):62--105, 1968.

\bibitem{Zhou-Zhu17}
Xin Zhou and Jonathan~J. Zhu.
\newblock Min-max theory for constant mean curvature hypersurfaces.
\newblock {\em arXiv:1707.08012}, 2017.

\bibitem{Zhou-Zhu18}
Xin Zhou and Jonathan~J. Zhu.
\newblock Existence of hypersurfaces with prescribed mean curvature i --
  generic min-max.
\newblock {\em preprint}, 2018.

\end{thebibliography}
\bibliographystyle{plain}

\end{document}